\documentclass[12pt]{amsart}

\usepackage{amsmath,amsthm,amsfonts,latexsym,amscd}

\usepackage[mathscr]{eucal}

\theoremstyle{plain}
            \newtheorem{theorem}{Theorem}[section]
            \newtheorem{lemma}[theorem]{Lemma}

\theoremstyle{definition}
            \newtheorem{definition}[theorem]{Definition}

\theoremstyle{remark}

\numberwithin{equation}{section}

\addtocounter{section}{0}

\newcommand{\script}[1]{{\mathcal{#1}}}

\newcommand{\spp}{\script P}

\newcommand{\bthrm}{\begin{theorem}}
 
\newcommand{\blem}{\begin{lemma}}
\newcommand{\elem}{\end{lemma}}
\newcommand{\bprf}{\begin{proof}}
\newcommand{\eprf}{\end{proof}}

\begin{document}

\title{Weak Paveability and the Kadison-Singer Problem}{ \thanks{This work was partially supported by T\"{U}BITAK Grant 107T896}}

\author{Charles A. Akemann, Joel Anderson and Bet\"ul Tanbay}{ \thanks{The authors wish to thank the Istanbul Center for Mathematical
Sciences for their hospitality.}}

\maketitle

\section{Introduction}

The Kadison-Singer Problem (hereinafter K-S) began with a problem in \cite{KS} and has since expanded to a very large number of equivalent problems in various fields 
(see \cite{pc} for an extensive discussion). 
In the present paper we will introduce the notion of  {\bf weak paveability} for positive elements of a von Neumann algebra $M$.  This new formulation  implies the traditional version of paveability \cite{ja1} iff K-S is affirmed ( see definitions below).  We show that the set of weakly paveable positive elements of $M^+$ is open and  norm dense in $M^+$.  Finally, we show that to affirm K-S it suffices to show that projections with compact diagonal are weakly paveable.  Therefore weakly paveable matrices will either contain a counterexample, or else weak paveability must be an easier route to affirming K-S.

\section{Definitions}

\begin{definition}  Let $M_n$ denote the $n\times n$ complex matrices and let $D_n$ denote the diagonal matrices in $M_n$.  The only matrix norm used will be the operator norm.    Let  $M = \sum_{n=2}^\infty \oplus_\infty  M_n$, so that  the 
elements of $M$ are bounded sequences $a=\{a_n\} $ , with $a_n \in M_n$.  Let $x_n$ be the unit of $M_n$ viewed as a summand of $M$. Let $K$ denote the compact elements of $M$, namely those $a \in M$ such that $\lim_{n \to \infty} ||x_n a|| = 0$. Let $\pi : M \to M/K$ be the quotient (aka Calkin) map. Recall that a state of $M$ is singular iff it vanishes on $K$. 

  It is well-known that $M$ is a von Neumann algebra under the sup norm on $M$, i.e. if $a=\{a_n\} \in M, ||a|| = \sup_n||a_n||$.  Let $D = \sum_{n=2}^\infty \oplus_\infty  D_n$, and write  $ \spp : M \to D$ for  the conditional expectation.  We shall refer to the elements of $M$ as matrices.
  
  Let $w(b)$ denote the numerical radius of the matrix $b \in M$.  
\end{definition}

\begin{definition}  A matrix $x\in M $ is {\bf paveable} if for every $\epsilon > 0$ there exist a natural number $m$ and projections $p_1, ..., p_m \in D$ such that $\sum p_j=1$ and  $||p_j(x-\spp(x))p_j|| < \epsilon$ for all $j = 1, ..., m$.
\end{definition}

One form of K-S asserts that every positive element of $M$ is paveable.  Another asserts that every pure state of $D$ extends uniquely to a state of $M$.

Remark:  Paveable matrices form a norm closed, self-adjoint subspace of $M$ \cite{bt}.  It is widely believed  that there are some matrices in $M^+$ that are not paveable.  This view is not shared by the authors.   Assume for the moment that this is the case.  Then it becomes of interest to understand which matrices are not paveable and why not.  In order to clarify this issue we introduce a  definition that looks similar, but is distinct.

\medskip

\begin{definition}  A non-compact matrix $x\in M^+$ is {\bf weakly paveable} if there exist a natural number $m$ and projections $p_1, ..., p_m \in D$ such that $\sum p_j=1$ and
$||\pi(p_j(x-\spp(x))p_j)|| < ||\pi(x)||$ for all $j = 1, ..., m$.
\end{definition}

\section{Basics of Weak Paving}

It was noted above that the set of paveable matrices in $M^+$ is closed.  By contrast, we now show  that the set of weakly paveable positive elements is open and dense in $M^+$.  From this it is clear that weak paveability implies paveability iff all matrices in $M^+$ are paveable.  (i.e.  iff K-S is affirmed.)

\begin{theorem}  The set of weakly paveable elements of $M^+$ is open for the norm topology and closed under non-zero scalar multiplication.
\end{theorem}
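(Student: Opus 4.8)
The plan is to handle the two assertions separately, starting with the easier one. Closure under non-zero scalar multiplication is almost immediate: if $x \in M^+$ is weakly paveable with witnessing projections $p_1,\dots,p_m$, and $t > 0$, then $\spp(tx) = t\,\spp(x)$, so $\pi(p_j(tx - \spp(tx))p_j) = t\,\pi(p_j(x-\spp(x))p_j)$, and likewise $\|\pi(tx)\| = t\|\pi(x)\|$ since $t > 0$. Dividing the defining inequality through by $t$ shows the same projections witness weak paveability of $tx$. (Note $tx$ is still non-compact, since $t \neq 0$.) One should also check the case $t<0$ if ``non-zero scalar'' is meant to include negatives, but since weak paveability is only defined for elements of $M^+$, presumably only $t>0$ is intended; I would state this explicitly.

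For openness, fix a weakly paveable $x \in M^+$ with projections $p_1,\dots,p_m \in D$ summing to $1$ and set $\delta = \max_j \|\pi(p_j(x-\spp(x))p_j)\| < \|\pi(x)\|$. The idea is that all the quantities appearing in the defining inequality vary continuously (in fact Lipschitz-continuously) in $x$, so a small norm perturbation preserves the strict inequality, using the \emph{same} partition of unity $p_1,\dots,p_m$. Concretely, suppose $y \in M^+$ with $\|y - x\| < \eta$ for $\eta$ to be chosen. Since $\pi$ and $\spp$ are contractive and $\spp$ commutes with $\pi$ (as $\spp(K) \subseteq K$, indeed $\spp$ maps $M$ onto $D$ and $K \cap D$ to itself), and since $p_j$ is a projection (so $\|p_j \cdot p_j\| \le \|\cdot\|$), we get
\[
\bigl\|\pi\bigl(p_j(y-\spp(y))p_j\bigr) - \pi\bigl(p_j(x-\spp(x))p_j\bigr)\bigr\| \le \|y - x\| + \|\spp(y) - \spp(x)\| \le 2\eta .
\]
Also $\bigl|\,\|\pi(y)\| - \|\pi(x)\|\,\bigr| \le \|y-x\| < \eta$. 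Choosing $\eta$ small enough that $\delta + 2\eta < \|\pi(x)\| - \eta$, i.e. $\eta < (\|\pi(x)\| - \delta)/3$, yields $\|\pi(p_j(y-\spp(y))p_j)\| \le \delta + 2\eta < \|\pi(x)\| - \eta < \|\pi(y)\|$ for every $j$, so $y$ is weakly paveable (it is non-compact for small $\eta$ since $\|\pi(y)\| > 0$). Hence the open ball of radius $\eta$ about $x$ in $M^+$ consists of weakly paveable matrices.

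I don't anticipate a serious obstacle here; the only point requiring a little care is the bookkeeping in the perturbation estimate — making sure the factor in front of $\eta$ (here $2$ from the two occurrences of the $\spp$-correction, or perhaps just the triangle inequality as written) is tracked correctly, and confirming that $\spp$ descends to the Calkin quotient so that $\pi \circ \spp$ makes sense and is contractive. It is also worth remarking that the same partition works throughout the ball, which is what makes the argument clean; one does not need to re-select projections for the perturbed matrix. Finally, I would note in passing that this openness, combined with the (already observed) fact that paveable matrices form a \emph{closed} set, is exactly what gives the dichotomy highlighted in the introduction — but that belongs to the density theorem, not this one.
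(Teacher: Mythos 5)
Your proof is correct and follows essentially the same route as the paper: keep the \emph{same} partition $p_1,\dots,p_m$, bound the change in $\|\pi(p_j(\cdot-\spp(\cdot))p_j)\|$ by $2\|y-x\|$ via the triangle inequality and contractivity of $\pi$ and $\spp$, bound the change in $\|\pi(\cdot)\|$ by $\|y-x\|$, and choose the radius small enough to preserve the strict inequality (the paper uses $(1-\delta)/4$ after normalizing $\|\pi(b)\|=1$, you use $(\|\pi(x)\|-\delta)/3$; both work). Your explicit checks that the perturbed element remains non-compact and that only positive scalars are relevant are minor points the paper leaves implicit.
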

\begin{proof} Closure under non-zero scalar multiplication is immediate.

 Now suppose  $b \in M^+$ is  weakly paveable and $||\pi(b)||= 1$.  This means  there exist a natural number $m$ and projections $p_1, ..., p_m \in D$ with sum 1 such that 
\[
\left \|\pi\bigg(\sum_{j=1}^m p_j(b-\spp(b))p_j\bigg)\right \|=\delta <1.
\]  

Choose $c \in M^+$ such that $||\pi(b-c)||\le ||b-c||< (1/4)(1-\delta)$.  With this  we get
\noindent $||\pi(b)||-||\pi(c)|| < (1/4)(1-\delta)$, so
 \[
 1-(1/4)(1-\delta)=(3+\delta)/4 < ||\pi(c)||.
 \]
 Thus  
\begin{align*}
&\le \bigg \| \pi(\sum_{j=1}^m p_j(c-\spp(c))p_j)\bigg\| \\
&\le\bigg\| \pi(\sum_{j=1}^m p_j(c-b-\spp(c-b))p_j)\bigg\| \\
 &+ \bigg \| \pi(\sum_{j=1}^m p_j(c-\spp(c))p_j)\bigg\|\\
& \le 2\|\pi(b-c)\|+\delta \le 2 (1/4)(1-\delta) + \delta\\
&=(1+\delta)/2 <(3+\delta)/4 <||\pi(c)||,
 \end{align*}
 
so $c$ is weakly paveable.  This shows that the set of weakly paveable elements of $M^+$ is open.

\end{proof}

\begin{theorem} The set of weakly paveable elements in $M^+$ is dense in $M^+$.
\end{theorem}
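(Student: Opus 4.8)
The plan is to exploit the strict inequality built into the definition of weak paveability: a non-compact $b\in M^+$ is already weakly paveable with $m=1$ and $p_1=1$ as soon as $\|\pi(b-\spp(b))\|<\|\pi(b)\|$. So it suffices to produce, for each $x\in M^+$ and each $\epsilon>0$, a non-compact $c\in M^+$ with $\|x-c\|\le\epsilon$ and $\|\pi(c-\spp(c))\|<\|\pi(c)\|$.

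First I would record the elementary estimate $\|\pi(y-\spp(y))\|\le\|\pi(y)\|$, valid for every $y\in M^+$. Since $\spp$ is a positive contraction that carries $K$ into $K$, it induces a positive contraction on the Calkin quotient $M/K$, so $\pi(\spp(y))\ge 0$ and $\|\pi(\spp(y))\|\le\|\pi(y)\|$. Because $y\ge 0$ and $\spp(y)\ge 0$ one has the operator inequalities $-\spp(y)\le y-\spp(y)\le y$ in $M$, hence the same inequalities after applying $\pi$; and in any C$^*$-algebra, $-a\le z\le c$ with $a,c\ge 0$ forces $\|z\|\le\max(\|a\|,\|c\|)$. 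This gives the claimed bound in $M/K$.

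Now, given $x\in M^+$ and $\epsilon>0$, I would simply set $c=x+\epsilon 1$. Then $c\ge\epsilon 1$, so $c\in M^+$ and $c$ is non-compact, while $\spp(c)=\spp(x)+\epsilon 1$, so $c-\spp(c)=x-\spp(x)$. Using the previous step,
\[
\|\pi(c-\spp(c))\|=\|\pi(x-\spp(x))\|\le\|\pi(x)\|,
\]
whereas $\pi(c)=\pi(x)+\epsilon 1\ge 0$ gives $\|\pi(c)\|=\|\pi(x)\|+\epsilon$. Hence $\|\pi(c-\spp(c))\|<\|\pi(c)\|$, so $c$ is weakly paveable, and $\|x-c\|=\epsilon$ is arbitrarily small. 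The case where $x$ itself is compact is included: then $\|\pi(x-\spp(x))\|=0<\epsilon=\|\pi(c)\|$. (If one prefers, openness of the weakly paveable set from the previous theorem may then be invoked, but it is not needed.)

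I do not anticipate a genuine obstacle here; the only point deserving a line of care is the inequality $\|\pi(y-\spp(y))\|\le\|\pi(y)\|$ for positive $y$, and in particular the observation that $\spp$ descends to a positive contraction on $M/K$.
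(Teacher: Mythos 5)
Your proof is correct and follows essentially the same route as the paper: perturb to $c=x+\epsilon 1$, observe $c-\spp(c)=x-\spp(x)$ while $\|\pi(c)\|=\|\pi(x)\|+\epsilon$, and conclude via $\|\pi(x-\spp(x))\|\le\|\pi(x)\|$. The only difference is that you spell out the justification of that last inequality (via $\spp$ descending to a positive contraction modulo $K$ and the two-sided operator bound), which the paper asserts without comment; this is a welcome bit of extra care, not a different argument.
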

\begin{proof}
Fix  $b \in M^+$  with $||\pi(b)|| = \delta$.  Let  $\epsilon > 0 $ be given and define $c = b + \epsilon 1$. Then $||\pi(c)|| =\delta + \epsilon$ but 

\begin{align*}
||\pi(c-\spp(c))|| &= ||\pi(b-\spp(b))||  \\ 
&\le ||\pi(b)||\le \delta \\
&< ||\pi(c)||. 
\end{align*}

 Thus $c$ is weakly paveable and $||c-b|| = \epsilon$.
\end{proof}

\section{Relationship to the Kadison-Singer Problem}

\begin{theorem} If $b \in M^+, ||\pi(b)||=1$, $\limsup_{n \to \infty}||\spp(bx_n)|| < 1$
and  there is a  unitary $u \in D$ such that 
\[
\limsup_{n \to \infty} w(x_nbub) =t < 1,
\]
then $b$ is weakly paveable.
\end{theorem}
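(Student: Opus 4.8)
I would take the partition to come from the spectrum of $u$. Fix a large $m$, cut the unit circle into $m$ arcs $A_1,\dots,A_m$ of angular width $2\pi/m$ with centres $e^{i\phi_j}$, and let $p_j\in D$ be the spectral projection of $u$ for $A_j$; then $\sum_j p_j=1$, and the $n$th block $p_j^{(n)}$ projects onto the coordinates whose diagonal phase of $u_n:=x_nu$ lies in $A_j$. The target is: for $m$ large, $\|\pi(p_jbp_j)\|<1$ for every $j$. Granting this, since $p_j\in D$ and $\spp$ is a $D$-bimodule map one has $p_j(b-\spp(b))p_j=p_jbp_j-\spp(p_jbp_j)$, i.e. the off-diagonal part of the positive matrix $p_jbp_j$, so the hypothesis on $\spp(bx_n)$ finishes the job (last paragraph).

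\textbf{Turning $w(bub)$ into norm bounds.} Work block by block; with $b_n=x_nb$, $u_n=x_nu$ we have $x_nbub=b_nu_nb_n$. Since the numerical radius is rotation invariant and $\|\mathrm{Re}(A)\|\le w(A)$ for every $A$, and $b_n=b_n^{*}$,
\[
\big\|\,b_n\,\mathrm{Re}(e^{i\ta}u_n)\,b_n\,\big\|=\big\|\mathrm{Re}\big(e^{i\ta}b_nu_nb_n\big)\big\|\le w(b_nu_nb_n)\qquad(\ta\in\mathbb R).
\]
Set $d_j:=\mathrm{Re}(e^{-i\phi_j}u)\in D$: it is a self-adjoint contraction whose $n$th block has diagonal entries $\cos(\cdot-\phi_j)$ evaluated at the phases of $u_n$, so $p_j^{(n)}d_jp_j^{(n)}\ge\cos(\pi/m)\,p_j^{(n)}$ while $\|d_j\|\le1$. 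Now fix a unit $\xi\in\mathrm{ran}\,p_j^{(n)}$ and put $\eta=b_n\xi$. Decomposing $d_j^{(n)}=p_j^{(n)}d_jp_j^{(n)}+(1-p_j^{(n)})d_j(1-p_j^{(n)})$ (legitimate because $d_j$ is diagonal), pairing against $\eta$, and using the displayed bound at $\ta=-\phi_j$ together with $\|d_j\|\le1$ gives
\[
\cos(\pi/m)\,\|p_j^{(n)}b_np_j^{(n)}\xi\|^{2}\le w(b_nu_nb_n)+\|(1-p_j^{(n)})b_n\xi\|^{2}.
\]
Since $\|p_j^{(n)}b_n\xi\|^{2}+\|(1-p_j^{(n)})b_n\xi\|^{2}=\|b_n\xi\|^{2}\le\|b_n\|^{2}$, this rearranges to $\|p_j^{(n)}b_np_j^{(n)}\|^{2}\le\big(w(b_nu_nb_n)+\|b_n\|^{2}\big)/(1+\cos(\pi/m))$.

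\textbf{Passing to the Calkin algebra and finishing.} Using $\|\pi(a)\|=\limsup_n\|x_na\|$, take $\limsup_n$ in the last inequality and use $\limsup_n\|b_n\|=\|\pi(b)\|=1$ and $\limsup_n w(b_nu_nb_n)=t$ to get $\|\pi(p_jbp_j)\|^{2}\le(t+1)/(1+\cos(\pi/m))$; choosing $m$ with $\cos(\pi/m)>t$ (possible since $t<1$) makes this $<1$ for every $j$. Finally, for a positive matrix $c\in M_n$ one has $\|c-\spp(c)\|\le\max(\|c\|,\|\spp(c)\|)$ (look separately at $\gl_{\max}(c-\spp c)\le\|c\|$ and $-\gl_{\min}(c-\spp c)=\gl_{\max}(\spp c-c)\le\|\spp c\|$), hence $\|p_j^{(n)}(b_n-\spp(b_n))p_j^{(n)}\|\le\max\big(\|p_j^{(n)}b_np_j^{(n)}\|,\ \|\spp(bx_n)\|\big)$; letting $n\to\infty$ yields $\|\pi(p_j(b-\spp(b))p_j)\|\le\max\big(\|\pi(p_jbp_j)\|,\ \limsup_n\|\spp(bx_n)\|\big)<1=\|\pi(b)\|$, which is exactly weak paveability. (The diagonal hypothesis is used precisely here, to control the negative part of the off-diagonal compression.)

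\textbf{The main obstacle.} The crux is the middle step: converting the single scalar estimate $w(bub)<1$ into simultaneous operator-norm control of the compressions $p_jbp_j$ across all blocks. Two ideas carry it: (i) the implication $w(bub)<1\Rightarrow\|b\,\mathrm{Re}(e^{i\ta}u)\,b\|<1$ for \emph{all} $\ta$, which lets one tune $\ta=-\phi_j$ to each arc uniformly in $n$; and (ii) absorbing the contribution of $1-p_j^{(n)}$ via the Pythagorean identity for $b_n\xi$ rather than by invoking a further hypothesis. I expect the bookkeeping in (ii) — extracting the clean factor $1+\cos(\pi/m)$ so that it beats $t+1$ for large $m$ — to be the only delicate point; the arc geometry, the inequality $\|\mathrm{Re}(A)\|\le w(A)$, and the final positive-matrix estimate are routine.
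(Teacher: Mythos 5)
Your proof is correct, and while you choose the same partition as the paper — spectral projections of $u$ attached to small arcs of the circle — the engine that makes it work is genuinely different. The paper argues by contradiction with soft state theory: if some block fails, say $\|\pi(p_1(b-\spp(b))p_1)\|=1$, it extracts a singular state $f$ with $f(p_1bp_1)=1$ (this is exactly where the hypothesis $\limsup_n\|\spp(bx_n)\|<1$ enters), deduces $p_1fp_1=f$ and $bfb=f$ from Cauchy--Schwarz, and then evaluates $f(bub)=f(u)$, which is within $(1-t)/4$ of the unimodular arc center $z_1$, contradicting $w\le t<1$. Your argument is direct and quantitative: the inequality $\|\mathrm{Re}(e^{i\theta}A)\|\le w(A)$ converts the numerical-radius hypothesis into operator-norm control of $b\,\mathrm{Re}(e^{-i\phi_j}u)\,b$ for every phase, and the Pythagorean absorption of the $(1-p_j^{(n)})$ contribution yields the explicit bound $\|\pi(p_jbp_j)\|^2\le(1+t)/(1+\cos(\pi/m))$, which is $<1$ as soon as $\cos(\pi/m)>t$. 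Your route buys an explicit choice of $m$ and a concrete Calkin-norm estimate; it also reveals that the hypothesis on $\spp(bx_n)$ is superfluous for this theorem, since for positive $c$ one has $\|\spp(c)\|\le\|c\|$ and hence $\|c-\spp(c)\|\le\|c\|$, so the bound $\|\pi(p_jbp_j)\|<1$ already forces $\|\pi(p_j(b-\spp(b))p_j)\|<1$. The paper's version is shorter and reuses the singular-state machinery that drives the remaining theorems of the paper, but it genuinely needs the diagonal hypothesis to pass from $f(p_1(b-\spp(b))p_1)=1$ to $f(p_1bp_1)=1$.
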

\begin{proof}  
Select orthogonal spectral projections $p_1, ..., p_m$ of $u$  corresponding to arcs on the unit circle with
centers $z_1, ..., z_m$ so that $w=\sum_1^mz_jp_j$ is unitary, and $\|u-w\|<(1-t)/4$. 

If we assume that $b$ is not weakly paved by $p_1, ..., p_m$, then renumbering if necessary, we may assume  that
$||\pi(p_1(b-\spp(b))p_1)||=1$.
This means 
there are a subsequence $\{n_j\}$ of the natural numbers and vector states $\{f_{n_j}=x_{n_j}f_{n_j}\}$  such that 

$$\lim_{j \to \infty} ||x_{n_j}p_1(b-\spp(b))p_1|| = \lim_{j \to \infty} |f_{n_j}(p_1(b-\spp(b))p_1)| = 1.$$

  Since 
  $\limsup_{n \to \infty}||\spp(bx_n)|| < 1 $, then the last equality implies that $$\lim_{j \to \infty} f_{n_j}(p_1bp_1) = 1,$$ and so  there is a singular state $f$ of $M$   ($f$ can be any subnet limit of $\{f_{n_j}\}$)
such that $f(p_1bp_1)=1$ and $f(\spp(b)) = 0$.  This implies that  $p_1fp_1=f$ and that $bfb=f$ by the
Cauchy-Schwarz inequality.  But then 
$$t = \limsup_{n \to \infty} w(x_nbub) \ge \limsup_{j \to \infty} |f_{n_j}(bub)|\ge|f(bub)|=|f(u)|$$
$$\ge|f(w)|-|f(u-w)| \ge|z_1|- (1-t)/4=1-(1-t)/4 > t. $$
a contradiction.
\end{proof}

\begin{theorem} If  $q \in M^+$ is  non-compact with compact diagonal, then the following conditions are
equivalent.

\begin{enumerate}
\item   There is a  unitary $u \in D$ such that 
$$\limsup_{n \to \infty} w(x_nquq) < 1.$$

\item $q$ is weakly paveable. 

\end{enumerate}

\end{theorem}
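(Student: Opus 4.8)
The plan is to treat the two implications separately, obtaining $(1)\Rightarrow(2)$ directly from Theorem 4.1 and proving $(2)\Rightarrow(1)$ by choosing the unitary $u$ to be a ``generic'' unitary manufactured from the weak--paving projections. Throughout I would normalize so that $\|\pi(q)\|=1$ (permissible for $(2)$ by Theorem 3.1, and for $(1)$ after rescaling the threshold accordingly). Since $q$ has compact diagonal, $\spp(q)\in K$, so $\pi(q-\spp(q))=\pi(q)$, $\pi(\spp(q))=0$, and $\limsup_{n\to\infty}\|\spp(qx_n)\|=\limsup_{n\to\infty}\|x_n\spp(q)\|=0$.

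For $(1)\Rightarrow(2)$ I would simply observe that the hypotheses of Theorem 4.1 hold with $b=q$: indeed $\|\pi(q)\|=1$, $\limsup_{n}\|\spp(qx_n)\|=0<1$, and the unitary $u\in D$ is the one furnished by $(1)$. Theorem 4.1 then yields at once that $q$ is weakly paveable.

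For $(2)\Rightarrow(1)$, fix weak--paving projections $p_1,\dots,p_m\in D$ with $\sum_j p_j=1$ and $\delta:=\max_j\|\pi(p_jqp_j)\|<1$ (using the compact diagonal to discard $\spp(q)$). Choose pairwise distinct scalars $z_1,\dots,z_m$ on the unit circle and set $u=\sum_j z_j p_j$, which is a unitary in $D$. Since $x_n$ is central, $w(x_nquq)=w\big((quq)_n\big)\le\|(quq)_n\|$, and $\limsup_n\|(quq)_n\|=\|\pi(quq)\|\le\|\pi(q)\|^2=1$, so $\limsup_n w(x_nquq)\le 1$; I would argue the inequality is strict by contradiction. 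If the $\limsup$ were $1$, then passing to a subsequence and choosing in each relevant finite block a unit vector attaining the numerical radius produces vector states $f_{n_j}=x_{n_j}f_{n_j}$ with $|f_{n_j}(quq)|\to 1$, and any weak$^*$ cluster point $f$ is then a singular state of $M$ with $|f(quq)|=1$. Writing $a_j=f(qp_jq)\ge 0$ gives $f(quq)=\sum_j z_ja_j$ and $\sum_j a_j=f(q^2)\le\|\pi(q)^2\|=1$; from $1=|\sum_j z_ja_j|\le\sum_j a_j\le 1$ together with the distinctness of the $z_j$ (strict triangle inequality unless all the mass sits on one index), exactly one $a_j$ is non-zero, say $a_1=1$, and in particular $f(q^2)=1$. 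Because $0\le\pi(q)\le 1$ forces $\pi(q)^2\le\pi(q)$, we get $1=f(q^2)\le f(q)\le 1$, so $f(q)=1$ and $f((1-q)^2)=0$; Cauchy--Schwarz then gives $f((1-q)x)=f(x(1-q))=0$ for all $x$, i.e. $qfq=f$. Hence $f(p_1)=f(qp_1q)=1$, so $p_1fp_1=f$ and $f(p_1qp_1)=f(q)=1$; but $f$ is singular and $\pi(p_1qp_1)\ge 0$ has norm at most $\delta<1$, so $f(p_1qp_1)\le\delta<1$, a contradiction. Therefore $\limsup_n w(x_nquq)<1$, which is $(1)$.

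The ingredients I expect to be routine (and already used implicitly in the proof of Theorem 4.1) are the identity $\limsup_n\|(quq)_n\|=\|\pi(quq)\|$, the fact that a sequence of block--supported vector states clusters weak$^*$ to a singular state, and the Cauchy--Schwarz manipulations taking $f((1-q)^2)=0$ to $qfq=f$. The one genuinely delicate point is the step exploiting distinctness of the $z_j$: it is precisely the strictness of the triangle inequality $|\sum_j z_ja_j|<\sum_j a_j$ (whenever two of the $a_j$ are positive) that lets ``$q$ is weakly paved by $p_1,\dots,p_m$'' force the gap in condition $(1)$, and aligning the spectral decomposition of $u$ with the paving projections is what makes the contradiction close. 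I do not anticipate any obstacle beyond getting this alignment argument and the $\|\pi(q)\|=1$ normalization bookkeeping exactly right.
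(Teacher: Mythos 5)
Your proposal is correct and follows essentially the same route as the paper: $(1)\Rightarrow(2)$ by direct appeal to Theorem 4.1, and $(2)\Rightarrow(1)$ by forming $u=\sum_j z_jp_j$ from the paving projections with distinct unimodular coefficients and deriving a contradiction from a singular cluster state $f$ with $|f(quq)|=1$. If anything, your write-up makes explicit two steps the paper leaves implicit (the strict triangle inequality forcing all mass onto one index, and the Cauchy--Schwarz passage from $f(q^2)=1$ to $qfq=f$).
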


\begin{proof} By the last theorem, (1) implies (2).

Now assume that  statement $(2)$ holds.  Because $q$ has compact diagonal,  there is a natural number $m$ and orthogonal projections $p_1, ..., p_m \in D$ such that 
\[
\sum p_j = 1 \quad\text{and} \quad||\pi(p_jqp_j)|| < t < 1\text{ for all}\quad j = 1,..., m.
\]
  Define a unitary matrix $u=\sum_1^m \lambda_jp_j$ where the $\{\lambda_j\}$ are distinct $m^{th}$ roots of unity. 

	Now assume that statement  (1)  fails for $u$ as defined above.  Then there are a subsequence $\{n_j\}$ and states $\{f_{n_j}\}$ of $M$ such that $f_{n_j}(x_{n_j}) = 1$ for all $j$ and
\[
|f_{n_j}(x_{n_j}quq)|=|\sum_{i=1}^m\lambda_if_{n_j}(x_{n_j}qp_iq)|= |\sum_{i=1}^m\lambda_i(qf_{n_j}q)(p_i)| \rightarrow_{j \to \infty} 1.
\]
Let $f$ be a limit point of the $\{f_{n_j}\}$.  Then $f$ is a singular state and 
\[
|\sum_{i=1}^m\lambda_i(qfq)(p_i)| = 1
\] 
Since the projections  $\{p_i\}$ sum to 1, this means that
$f(qp_iq) = 1$ for precisely one index $i$ (WLOG $i= 1$) and is zero for all others.  Thus $f(q) = 1$ also $f(p_1)=1$, so we get 
\[
1 > \|\pi(p_1qp_1)\| \ge  f(p_1qp_1) = 1, 
\]
a contradiction.  Thus statement (2) implies statement (1).

\end{proof}

\begin{theorem}  The following statements are equivalent.

\begin{enumerate}

\item Every non-compact  projection with compact diagonal is weakly paveable.

\item Every non-compact projection in $M$ is weakly paveable.

\item Pure states of $D$ have unique pure state extensions to $M$.
\end{enumerate}
\end{theorem}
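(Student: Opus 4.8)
The implication $(2)\Rightarrow(1)$ is trivial. For $(3)\Rightarrow(2)$ I would argue contrapositively: if some non-compact projection $p$ is not weakly paveable, then in particular $p$ is not paveable in the sense relevant to K-S, and by the classical Anderson-type reduction (every non-paveable positive element forces a pure state of $D$ with two distinct state extensions) one obtains a pure state of $D$ with a non-unique extension. More carefully, I would note that $\|\pi(p-\spp(p))p_j p_j\|$... rather, the cleanest route is: failure of weak paveability of $p$ means that for \emph{every} finite diagonal partition $\{p_j\}$ there is some $j$ with $\|\pi(p_j(p-\spp(p))p_j)\|=\|\pi(p)\|$, and by a compactness/singular-state argument (exactly as in the proof of Theorem preceding, using vector states along a subsequence and passing to a singular limit $f$) one produces a singular state $f$ of $M$ with $f(p_jpp_j)=\|\pi(p)\|^2$ for a suitable $j$ in each partition; a maximality/intersection argument over all partitions then yields a singular state $f_0$ whose restriction to $D$ is pure and which does not agree on $M$ with the (unique up to the K-S question) "diagonal" extension $g$ of $f_0|_D$, since $g(p-\spp(p))=0$ while $f_0$ does not kill $p-\spp(p)$. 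That contradicts $(3)$.

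For the main implication $(1)\Rightarrow(3)$, I would invoke the known equivalent formulations of K-S collected in \cite{pc}, together with the reduction of paving to projections. It is classical (Anderson, and see \cite{ja1,bt}) that statement $(3)$ is equivalent to: every projection in $M$ is paveable; and that it suffices to check non-compact projections with compact diagonal (a projection with non-compact diagonal is automatically handled, and one may compress to the compact-diagonal case). So the real content is: weak paveability of all non-compact projections with compact diagonal implies ordinary paveability of the same. Here I would use Theorem~4.3 (the one characterizing weak paveability of such $q$ via existence of a unitary $u\in D$ with $\limsup_n w(x_n q u q) < 1$): starting from weak paveability of $q$ we get such a $u$, and then one wants to iterate. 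The idea is that once $w(x_n q u q)$ is bounded away from $1$ in the Calkin quotient, one can \emph{bootstrap}: apply the hypothesis again to the pieces $p_i q p_i$ (which again have compact diagonal, being compressions by diagonal projections), get finer partitions, and show the off-diagonal norm $\|\pi(p_j(q-\spp(q))p_j)\|$ is driven not just below $\|\pi(q)\|$ but below any prescribed $\epsilon$. Combined with compactness of the diagonal this yields full paving of $q$, hence K-S, hence $(3)$.

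The step I expect to be the main obstacle is precisely this bootstrapping in $(1)\Rightarrow(3)$: a single application of weak paveability only lowers the Calkin-quotient off-diagonal norm below $\|\pi(q)\|$, not below an arbitrary $\epsilon$, and it is not a priori clear that iterating finitely many times forces the quantity to $0$ rather than stabilizing at some positive value. The resolution should come from applying weak paveability \emph{uniformly} to a whole family of compressions and using that the numerical-radius condition in Theorem~4.3, once available for the compressed matrices with a controlled constant $t<1$, can be combined geometrically across the partition — the estimate in the proof of Theorem~4.2, $2\|\pi(b-c)\| + \delta$, already shows the relevant norms behave subadditively under refinement, so a geometric-decay argument over $\log(1/\epsilon)$ many refinement steps should close the gap. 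I would also need to be careful that "compact diagonal" is preserved and that the subsequential singular-state extraction is uniform over the (finitely many, at each stage) blocks, but those are routine once the decay estimate is in hand.
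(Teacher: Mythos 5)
Your reductions $(2)\Rightarrow(1)$ and $(3)\Rightarrow(2)$ are essentially fine: the latter is Anderson's theorem \cite{ja1} that (3) forces paveability of projections, plus the observation that a paveable non-compact positive element is automatically weakly paveable (take $\epsilon<\|\pi(q)\|$); the extra ``maximality/intersection over all partitions'' machinery you sketch is unnecessary and not obviously correct, but the citation route works and is what the paper does. The genuine gap is your plan for $(1)\Rightarrow(3)$. You propose to bootstrap weak paveability into full paveability by iterating over refinements, and you yourself flag the obstacle: one application of weak paveability only pushes $\|\pi(p_j(q-\spp(q))p_j)\|$ below $\|\pi(q)\|$, not below an arbitrary $\epsilon$. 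That obstacle is real and your proposed resolution does not close it: (a) the pieces $p_iqp_i$ to which you want to re-apply the hypothesis are positive elements but not projections, whereas statement (1) only covers projections; (b) the estimate $2\|\pi(b-c)\|+\delta$ comes from the openness theorem and is a perturbation bound, not a subadditivity-under-refinement bound, so there is no geometric decay to extract from it. Indeed, if this bootstrap worked it would derive actual pavings (hence K-S itself) from weak paveability alone, which is far more than the theorem requires and is precisely the distinction the notion of weak paveability is designed to preserve.

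The paper avoids the bootstrap entirely by routing through $(1)\Rightarrow(2)\Rightarrow(3)$, and both steps use ideas absent from your plan. For $(2)\Rightarrow(3)$: if (3) fails, Anderson's results give orthogonal singular pure states $f,g$ of $M$ agreeing on $D$ with $f=f\circ\spp$; a noncommutative-Urysohn projection $q$ with $f(q)=0$, $g(q)=1$ is weakly paveable by (2), and evaluating $g$ on the single block $p_{j_0}$ carrying the common pure restriction to $D$ yields $g(p_{j_0}(q-\spp(q))p_{j_0})=1-f(q)=1$, contradicting the single strict inequality $\|\pi(p_{j_0}(q-\spp(q))p_{j_0})\|<1$ that weak paveability supplies. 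One partition suffices; no passage to $\epsilon\to 0$ is needed. For $(1)\Rightarrow(2)$ the missing idea is a compactness/uniformity argument: if there were no uniform constants $\gamma,\epsilon$ such that every finite rank projection $q$ with $\|\spp(q)\|<\gamma$ admits a diagonal unitary $u$ with $w(quq)<1-\epsilon$, one could assemble a sequence of bad finite rank blocks with orthogonal central covers into a single non-compact projection with compact diagonal violating the numerical-radius characterization of Theorem 4.2, contradicting (1); the uniform constants then produce, blockwise, a diagonal unitary witnessing $w(quq)\le 1-\epsilon$ for an arbitrary projection with small diagonal, and Theorem 4.1 concludes. I recommend you replace the bootstrapping step with this two-stage argument.
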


\begin{proof} By \cite{ja1}, (3) implies (2), and clearly (2) implies (1).

 If (2) is true and (3) is false, by  \cite{ja1} there are singular pure states $f \perp g$ of $M$ that restrict to the same pure state of $D$ and $f=f\circ \spp$.  By the non-commutative Urysohn's lemma, there is a projection $q \in M$ such that $f(q)=0, g(q)=1$. 

Since (2) is assumed to be true, $q$ is weakly paveable, so there are a natural number $m$ and projections $p_1, ..., p_m \in D$ with sum 1 such that 
 $||\pi(p_j(q-\spp(q))p_j)|| < 1$ for all $j=1, ..., m$.    Since $g|_D$ is multiplicative, there is a unique $j_0$ such that $f(p_{j_0}) = 1=g(p_{j_0})$.  Now use the assumed facts about $f,g,p_{j_0}$ (such as $g|_D=f|_D$) to get: 
$$g(p_{j_0}(q-\spp(q))p_{j_0})=g(q)-g(\spp(q))=1-f(\spp(q))=1-f(q) = 1,$$
contradicting $$||\pi(p_j(q-\spp(q))p_j)|| < 1$$ for all $j=1, ..., m$. Thus statements (2) and (3) are equivalent.

Now assume 1 and we prove 2.  
First we prove an intermediate fact.

\noindent $(**)$  There exists constants $ 0 < \gamma, 0 < \epsilon \le 1/2$ such that for every finite rank projection $q \in M$  with $||\spp(q)|| < \gamma$, there is a unitary $u \in D$ such that $w(quq) < 1-\epsilon$.

If  $(**)$  is false, then we may recursively construct finite rank projections $\{q_j\} \subset M$   that have orthogonal central covers and that satisfy $||\spp(q_j)|| < 1/j$ and $w(q_juq_j) > 1-1/j$ for all unitary $u \in D$.  Define $q= \sum q_j$.  Then $q$ has compact diagonal and for all unitary $u \in D, w(quq) = 1$.  However, by assumption $q$ is weakly paveable, so by Theorem 4.2, we know that $w(quq)$ is eventually less than 1 for some unitary $u \in D$. Contradiction.  So $(**)$ is true.

Using the $\gamma, \epsilon$ from $(**)$, we claim the following is true.

\noindent$(***)$  For any projection $q \in M$ with $||\spp(q)|| < \gamma$, there exists unitary $u \in D$ such that $w(quq) \le 1- \epsilon$.

Choose a projection $q \in M$ with $||\spp(q)|| < \gamma$.  By (**)  for each $n$ we may choose unitary $u_n = x_nu_n \in D$ such that $w(qu_nq) < 1-\epsilon$.  Let $u = \sum u_n$, then 
$w(quq) \le 1-\epsilon$. This proves (***).

Therefore $q$  is weakly paveable by Theorem 4.1. 

\end{proof}

\end{document}